\def\BibTeX{{\rm B\kern-.05em{\sc i\kern-.025em b}\kern-.08em
    T\kern-.1667em\lower.7ex\hbox{E}\kern-.125emX}}
\newcommand{\CADD}{{\mathsf{CADD}}}
\newcommand{\WADD}{{\mathsf{WADD}}}
\providecommand{\scrP}{\mathscr{P}}
\providecommand{\calP}{\mathcal{P}}
\providecommand{\supp}{\mathrm{supp}}
\providecommand{\wass}{\mathsf{W}}
\providecommand{\ind}{\mathbb{I}}
\DeclareMathOperator*{\esssup}{ess\,sup}
\DeclareMathOperator*{\argmin}{arg\,min}
\def\cT{{\cal T}}
\def\bE{{\mathbb{E}}}
\newtheorem{theorem}{Theorem}
\newtheorem{remark}{Remark}
\newtheorem{definition}{Definition}
\begin{document}
\title{Minimax Robust Quickest Change Detection using Wasserstein Ambiguity Sets}

\author{\IEEEauthorblockN{
Liyan Xie}
\IEEEauthorblockA{\textit{School of Data Science} \\
\textit{The Chinese University of Hong Kong, Shenzhen}\\
Shenzhen, China \\
xieliyan@cuhk.edu.cn}
}

\maketitle

\begin{abstract}
We study the robust quickest change detection under unknown pre- and post-change distributions. To deal with uncertainties in the data-generating distributions, we formulate two data-driven ambiguity sets based on the Wasserstein distance, without any parametric assumptions. The minimax robust test is constructed as the CUSUM test under least favorable distributions, a representative pair of distributions in the ambiguity sets. We show that the minimax robust test can be obtained in a tractable way and is asymptotically optimal. We investigate the effectiveness of the proposed robust test over existing methods, including the generalized likelihood ratio test and the robust test under KL divergence based ambiguity sets.
\end{abstract}

\begin{IEEEkeywords}
CUSUM test, Least favorable distributions, Robust change detection, Wasserstein metric
\end{IEEEkeywords}


\section{Introduction}\label{sec:intro}

Quickest change detection aims to detect a potential change-point from sequential data and is widely applicable in signal processing and statistical problems \cite{tartakovsky2014sequential,Siegmund1985,veeravalli2013quickest}. 
Classical approaches, such as the well-known cumulative sum (CUSUM) test \cite{page-biometrica-1954}, are usually designed for cases where the pre- and post-change distributions are exactly known. When the post-change distribution is unknown, the generalized likelihood ratio (GLR) test \cite{lai-ieeetit-1998} is commonly used, in which the post-change distributions are sequentially estimated based on maximum likelihood.

However, the maximum likelihood estimate may deviate significantly from the true parameter if we only have limited data samples or the observations are contaminated \cite{huber1965robust,gao2018robust}. We aim to overcome this limitation by considering a robust quickest change detection problem by constructing {\it ambiguity sets} for the distribution estimates. The goal is to find the minimax robust test that minimizes the worst-case detection delay over the ambiguity sets \cite{fauss2021minimax}. In \cite{huber1965robust}, it is proved that an exact minimax robust optimal test does not hold for the robust sequential detection problem in general. Therefore, most work focus on finding the asymptotically optimal test \cite{sun2021data,molloy2017misspecified}.

The minimax robust change detection has been studied in \cite{molloy2017misspecified} and \cite{unnikrishnan2011minimax} with two ambiguity sets that are given in a priori, for the pre- and post-change distributions, respectively. In \cite{unnikrishnan2011minimax}, it is proved that under the joint stochastic boundedness condition on the pre- and post-change distributional ambiguity sets, the detection rule based on least favorable distributions (LFDs) are minimax robust under several performance metrics. Although the joint stochastic boundedness condition can be satisfied and verified for several classical types of ambiguity sets, it is difficult to verify for modern types of ambiguity sets, e.g., the KL ambiguity sets. Later in \cite{molloy2017misspecified}, the problem is solved by proving a weaker condition on the ambiguity sets, and asymptotic optimal solutions are proposed.
A recent work \cite{hare2021toward} studies the change detection with uncertain distributions from the Bayesian perspective by applying the uncertain likelihood ratio \cite{hare2020non} test. However, the posterior prediction distribution cannot be calculated when the parametric model is unknown or insufficient to model the data distribution. 


The main contribution of this work is a non-parametric method for minimax robust quickest change detection based on Wasserstein ambiguity sets \cite{villani2003topics}. The key advantage is that the proposed method does not require complete knowledge about pre- and post-change distributions and parametric assumptions. Moreover, the resulting LFDs from the Wasserstein ambiguity sets are proved to be efficiently solvable, and thus the proposed test can be applied to a wide range of applications.



The remainder of this paper is organized as follows. Section \ref{sec:formulation} details the problem set-up, including the performance criteria and the construction of the ambiguity sets. Section \ref{sec:theory} derives a tractable formulation to find the LFDs and the minimax optimal test. Section \ref{sec:numerical} demonstrates the proposed detection 
procedure using synthetic data. Section \ref{sec:conclusion} concludes the paper with possible future directions.

\section{Problem Setup} \label{sec:formulation}

The quickest change detection problem can be formulated as follows. Given observations $\{x_t,\ t=1,2,\ldots\}$ in the sample space $\mathcal X$, we aim to detect the change-point $\tau$ at which the data-generating distribution changes from $\mu$ to $\nu$:
\begin{equation}
\begin{array}{ll}
x_t  \stackrel{\text{iid}}{\sim} \mu, &t = 1,2,\ldots,\tau-1, \\
x_t \stackrel{\text{iid}}{\sim} \nu, &t = \tau,\tau+1,\ldots
\end{array} 
\label{eq:hypothesis}
\end{equation}
We consider the case where $\tau$ is {\it unknown} but is a {deterministic} value.
An important quantity for the detection problem \eqref{eq:hypothesis} is the Kullback-Leibler (KL) divergence defined as follows.
\begin{definition}[KL divergence \cite{kullback1951information}]
The KL divergence between two probability distributions $\nu$ and $\mu$ is:
\[
\mathsf{KL}(\nu||\mu) = \int  \{\log (d\nu(x)/d\mu(x))\}d\nu(x).
\]
\end{definition}

Let $\scrP(\mathcal X)$ denote the
family of all probability distributions supported on the sample space $\mathcal X$. Assume there exists a probability space $(\mathcal X,\mathcal F,\mathbb P_\tau^{\mu,\nu})$ where $\mathbb P_{\tau}^{\mu,\nu}$ denotes the probability measure when the change-point equals to $\tau$ and the pre- and post-change probability measures being $\mu$ and $\nu$, respectively. In particular, $\mathbb P_\infty^\mu$ and $\mathbb E_\infty^\mu$ denote the probability and expectation when there is no change-point (i.e., $\tau=\infty$) and the pre-change distribution being $\mu$. Similarly, $\mathbb P_0^\nu$ and $\mathbb E_0^\nu$ denote the probability and expectation when all samples are generated from the post-change distribution $\nu$. 

 Our goal is to detect the unknown change-point $\tau$ as quickly as possible while at the same time keeping the false alarm rate below a pre-specified level. Usually, the detection is performed by designing a {\em stopping time} on the data sequence \cite{xie2021sequential}. A stopping time with respect to the random data sequence $\{x_t\}_t$ is a random variable $T$ such that for any $n$, the event $\{T=n\}$ belongs to the sigma-algebra generated by $\{x_1, \ldots, x_n\}$.


\subsection{Performance Criteria}

We typically focus on two criteria to measure the performance of a stopping time $T$. One is the average run length (ARL) used to measure the average time between consecutive false alarms, defined as $\mathbb E_\infty^\mu[T]$.
Usually we impose certain lower bound $\gamma$ on the ARL and only consider the stopping times satisfying $\mathbb E_\infty^\mu[T]\geq \gamma$.
The other criteria is the detection delay.
There are two main measures for the detection delay, the Lorden's measure \cite{Lorden1971} and the Pollak's measure \cite{poll-astat-1985}.

The Lorden's measure for detection delay is defined as the worst-case average detection delay (WADD), which is the supremum of the average delay conditioned on the worst-case historical data and  change-point:
\begin{equation}\label{eq:WADDdef}
\WADD^{\mu,\nu}(T)\! = \! \underset{n \geq 1}{\operatorname{\sup}} \esssup \ \mathbb E_n^{\mu,\nu}\left[(T-n)^+| X_1, \dots, X_{n-1}\right].
\end{equation}
A less conservative characterization of detection delay is proposed by Pollak \cite{poll-astat-1985} as the conditional average detection delay (CADD) conditioned on the event that $\{T\geq n\}$:
\begin{equation}\label{eq:PollakCADDDef}
\CADD^{\mu,\nu}(T) = \underset{n \geq 1}{\operatorname{\sup}}\ \mathbb E_n^{\mu,\nu}[T-n| T\geq n].
\end{equation} 

\subsection{Uncertainty Model}
Consider the case when the pre- and post-change probability measure $\mu$ and $\nu$ in \eqref{eq:hypothesis} are {\it unknown}. This typically happens in real data applications, especially for data with complex structures or of high-dimensionality.
To deal with the uncertainties in distributions, we construct two ambiguity sets $\calP_{\mu_0},\calP_{\nu_0}$ for pre- and post-change distributions, respectively. 

Assume we have a nominal distribution $\mu_0$ and $\nu_0$ for pre- and post-change, and the ambiguity sets $\calP_{\mu_0},\calP_{\nu_0}$ are the collection of probabilities measures that are close to $\mu_0, \nu_0$ with respect to certain divergence measures $D(\cdot,\cdot)$:
\begin{equation}\label{eq:ambiguity}
\begin{aligned}
\calP_{\mu_0} &= \{\mu\in \scrP(\mathcal X): D(\mu,\mu_0) \leq r_1\},\quad \\ 
\calP_{\nu_0} &= \{\nu\in \scrP(\mathcal X): D(\nu,\nu_0) \leq r_2\},
\end{aligned}
\end{equation}
where $r_1,r_2\geq 0$ are the radius parameter controlling the size of ambiguity sets. 
Some commonly used divergence measures $D(\cdot,\cdot)$ include the KL divergence \cite{gul2017minimax,Levy2009}, Total-Variation distance \cite{huber1965robust,huber1973minimax,fauss2020minimax}, Wasserstein metric \cite{gao2016distributionally,esfahani2015data,gao2018robust}, etc. 

In this paper, we consider a fully {\it data-driven} and {\it non-parametric} setting where (i) the nominal distribution is set as the {\it empirical distribution} from historical data, and (ii) the ambiguity sets are constructed using the {\it Wasserstein distance}.

In the data-driven case, suppose we have a set of training samples $\{x_1,\ldots,x_{n_1}\}$ that are i.i.d. sampled from the pre-change regime, and $\{y_1,\ldots,y_{n_2}\}$ that are i.i.d. sampled from the post-change regime, the nominal distribution is set as the empirical distribution of those historical samples, i.e., $\mu_0 =(\sum_{i=1}^{n_1} \delta_{x_i})/n_1$, $\nu_0=(\sum_{i=1}^{n_2} \delta_{y_i})/n_2$, where $\delta_{x}$ denotes the Dirac point mass concentrated on $x$ for each $x\in\mathcal X$, i.e., $\delta_{x}(A) = \ind\{x \in A\}$ for any Borel measurable set and $\ind\{\cdot\}$ is the indicator function.
\begin{remark}
The historical data used here is additional available data before we start the detection procedure for problem \eqref{eq:hypothesis}. If we have no access to historical data in post-change regime beforehand, we may consider construing the post-change ambiguity sets adaptively with sequential observations, and the detailed discussion will be left for future work.
\end{remark}
Moreover, the Wasserstein metric we use in this paper is defined as follows. 
For two given distributions $P,Q\in \scrP(\mathcal X)$, their Wasserstein distance (of order 1) equals to \cite{villani2003topics}:
\begin{equation}\label{eq:was}
\wass(P,Q) :=\min_{\Gamma\in\Pi(P,Q)}\bE_{(\omega,\omega')\sim\Gamma} \left[c(\omega,\omega')\right], 
\end{equation}
where $c(\cdot,\cdot): \mathcal X \times \mathcal X \rightarrow \mathbb R_+$ is a metric, and $\Pi(P,Q)$ is the set of all joint probability distributions on $\mathcal X\times\mathcal X$ with marginal distributions $P$ and $Q$. 
%

%

Substitute the nominal distribution as the empirical distributions and the divergence measure as the Wasserstein metric, we construct the ambiguity sets as in \eqref{eq:ambiguity}:
\begin{equation}\label{eq:was_set_vanilla}
\begin{aligned}
\calP_{\mu_0} &= \{\mu\in \scrP(\mathcal X): \wass(\mu,\mu_0) \leq r_1\},\\
\calP_{\nu_0} &= \{\nu\in \scrP(\mathcal X): \wass(\nu,\nu_0) \leq r_2\}.
\end{aligned}
\end{equation}

\subsection{Minimax Robust Change Detection}

Under the ambiguity sets \eqref{eq:was_set_vanilla}, we aim to find the robust optimal stopping time that solves the following problem:
\begin{equation}\label{eq:Lorden}
\inf_{T\in C(\gamma,\calP_{\mu_0})} \sup_{\mu\in\calP_{\mu_0},\nu\in \calP_{\nu_0}}\WADD^{\mu,\nu}(T),
\end{equation}
where $C(\gamma,\calP_{\mu_0})$ is the set containing all stopping times $T$ that satisfies $\mathbb E_\infty^\mu[T] \geq \gamma, \forall \mu\in\calP_{\mu_0}$.
Similarly, the corresponding problem defined using $\CADD$ is:
\begin{equation}\label{eq:Pollak}
\inf_{T\in C(\gamma,\calP_{\mu_0})} \sup_{\mu\in\calP_{\mu_0},\nu\in \calP_{\nu_0}}\CADD^{\mu,\nu}(T).
\end{equation}
In general, it may be challenging to exactly solve the problems \eqref{eq:Lorden} and \eqref{eq:Pollak}. Therefore, asymptotically optimal solutions for the above problems are often investigated in practice. A solution $T_0\in C(\gamma,\calP_{\mu_0})$ is called first-order asymptotic optimal \cite{xie2021sequential} for \eqref{eq:Lorden} (and similarly defined for \eqref{eq:Pollak}) if: 
\[
\lim_{\gamma\rightarrow \infty} \frac{ \sup_{\mu\in\calP_{\mu_0},\nu\in \calP_{\nu_0}}\WADD^{\mu,\nu}(T_0)}{\inf_{T\in C(\gamma,\calP_{\mu_0})} \sup_{\mu\in\calP_{\mu_0},\nu\in \calP_{\nu_0}}\WADD^{\mu,\nu}(T)} = 1.
\]
\begin{remark}
The choice of the radius $r_1$, $r_2$ is crucial for the minimax detection problem. There is a tradeoff between model robustness and detection performance. A large radius will lead to a more robust detection but also a larger detection delay. Empirically, we can use cross-validation to set the radius. Theoretically, we may analyze the concentration of the Wasserstein distance to determine the appropriate radius \cite{fournier2015rate}. 
\end{remark}

\section{Optimal stopping time and theoretical guarantee}\label{sec:theory}

In this section, we derive the asymptotic optimal stopping time that solves the problem \eqref{eq:Lorden} and \eqref{eq:Pollak}. Based on previous results established in \cite{molloy2017misspecified}, the optimal stopping time can be constructed based on a pair of distributions in the ambiguity sets $(\calP_{\mu_0},\calP_{\nu_0})$, which are called the {\it least favorable distributions} (LFD). We first list the conditions to find such a pair of LFDs and show that they can be efficiently solved under the Wasserstein ambiguity sets \eqref{eq:was_set_vanilla}. Then we construct the optimal stopping time, which is a CUSUM test \cite{page-biometrica-1954} based on LFDs. 

\subsection{Least Favorable Distributions}
There are two types of conditions for finding the LFDs, which can be viewed as a representative pair of distributions within $(\calP_{\mu_0},\calP_{\nu_0})$ on which the stopping time reaches the worst-case performance.
%
The first condition, joint stochastic boundedness, was proposed in \cite{unnikrishnan2011minimax} as follows.
\begin{definition}[Joint stochastic boundedness \cite{unnikrishnan2011minimax}]
A pair of ambiguity sets $(\calP_{\mu_0},\calP_{\nu_0})$ is jointly stochastically bounded by the pair of distributions $(\tilde \mu,\tilde \nu)$ if $\forall \nu \in \calP_{\nu_0}$,
\[
\mathbb P^{\tilde \nu}\left(\log \frac{d\tilde \nu}{d\tilde \mu}(X) \geq x\right) \leq \mathbb P^{\nu}\left(\log \frac{d\tilde \nu}{d\tilde \mu}(X) \geq x\right), \forall x \in \mathbb R,
\]
and $\forall \mu \in \calP_{\mu_0}$,
\[
\mathbb P^{\mu}\left(\log \frac{d\tilde \nu}{d\tilde \mu}(X) \geq x\right) \leq \mathbb P^{\tilde \mu}\left(\log \frac{d\tilde \nu}{d\tilde \mu}(X) \geq x\right), \forall x \in \mathbb R.
\]
\end{definition}

This condition was later relaxed by \cite{molloy2017misspecified} as follows. 
\begin{definition}[Weak stochastic boundedness \cite{molloy2017misspecified}]
A pair of ambiguity sets $(\calP_{\mu_0},\calP_{\nu_0})$ is weakly  stochastically bounded by the pair of distributions $(\tilde \mu,\tilde \nu)$ if
\begin{equation}\label{eq:cond1}
  \mathsf{KL}(\tilde \nu ||\tilde \mu) \leq \mathsf{KL}(\nu ||\tilde \mu) - \mathsf{KL}(\nu || \tilde \nu), \forall \nu \in \calP_{\nu_0},  
\end{equation}
and
\begin{equation}\label{eq:cond2}
\mathbb E^\mu\left[\frac{d\tilde \nu}{d\tilde \mu}(X)\right]\leq \mathbb E^{\tilde \mu}\left[\frac{d\tilde \nu}{d\tilde \mu}(X)\right]=1, \forall \mu \in \calP_{\mu_0}.
\end{equation}
\end{definition}
In \cite{molloy2017misspecified}, it was shown that finding the pair of distributions that satisfies the weak stochastically boundedness condition is equivalent to finding the pair of distributions that minimizes the pairwise KL divergence between ambiguity sets. More specifically, the LFDs $(\tilde \mu,\tilde \nu)$ satisfying \eqref{eq:cond1} is a solution to:
\begin{equation}\label{eq:kl_min}
\min_{\mu \in \calP_{\mu_0},\nu \in \calP_{\nu_0}} \mathsf{KL}(\nu||\mu).
\end{equation}


Our main finding is that under the Wasserstein ambiguity sets \eqref{eq:was_set_vanilla}, the pair of distributions such that the sets are weakly stochastic bounded can be found through the following convex optimization problem efficiently. Denote $n=n_1+n_2$, $\{z_1,\ldots,z_n\}$ as the union of pre- and post-change historical data in the order of $\{x_1,\ldots,x_{n_1},y_1,\ldots,y_{n_2}\}$, and $\bm{1}_n$ as a $n$-dimensional column vector with all entries equal to one.
\begin{theorem}[LFD]\label{thm:lfd}
The pair of LFD solving \eqref{eq:kl_min} can be found by the following finite-dimensional convex program
 \begin{equation}\label{eq:wass vs wass:dual}
  \begin{aligned}
    &&\min_{\substack{p_1,p_2\in\mathbb R_+^{n}\\\Gamma_1,\Gamma_2\in\mathbb R_+^{n\times n}}}\!         &&& \!
     \sum_{l=1}^{n} p_2^l\log(p_2^l/p_1^l) \\
    && \mbox{{\rm subject to\; }} &&& \sum_{l=1}^{n}\sum_{m=1}^{n} \Gamma_{k, l, m} c(z_l,z_m)\leq r_k,\; k=1,2; \\
    &&&&& \Gamma_{1, l, m}\bm{1}_n= \mu_0,\ \Gamma_{2, l, m}\bm{1}_n= \nu_0;\\
            && &&& \Gamma_{1, l, m}^\intercal\bm{1}_n= p_1,\  \Gamma_{2, l, m}^\intercal\bm{1}_n= p_2.
  \end{aligned}
\end{equation}
\end{theorem}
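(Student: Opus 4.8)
The plan is to show that the infinite-dimensional program \eqref{eq:kl_min} admits a minimizer whose two marginals are both supported on the finite set $Z=\{z_1,\dots,z_n\}$, after which \eqref{eq:kl_min} collapses to the finite-dimensional program \eqref{eq:wass vs wass:dual}. First I would rewrite the Wasserstein constraints through the couplings that define them in \eqref{eq:was}: a measure $\mu$ lies in $\calP_{\mu_0}$ iff there is a coupling $\Gamma_1\in\Pi(\mu_0,\mu)$ with $\bE_{\Gamma_1}[c]\le r_1$, and similarly $\nu\in\calP_{\nu_0}$ iff some $\Gamma_2\in\Pi(\nu_0,\nu)$ has $\bE_{\Gamma_2}[c]\le r_2$. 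Hence \eqref{eq:kl_min} is equivalent to jointly choosing couplings $\Gamma_1,\Gamma_2$ whose first marginals are the empirical measures $\mu_0,\nu_0$, subject to the two linear transport-cost budgets, so as to minimize $\mathsf{KL}(\nu\|\mu)$ between their second marginals $\mu,\nu$.

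The core of the argument is a support-reduction lemma: some optimizer has $\supp(\mu)=\supp(\nu)\subseteq Z$. Two observations set this up. Finiteness of $\mathsf{KL}(\nu\|\mu)$ forces $\nu\ll\mu$, so $\supp(\nu)\subseteq\supp(\mu)$; moreover, any probability that $\mu$ assigns to points where $\nu$ vanishes is diverted from the shared support and only inflates the likelihood ratios $d\nu/d\mu$ there while consuming transport budget, so without loss $\supp(\mu)=\supp(\nu)$. It then remains to move every shared atom sitting at some $w\notin Z$ onto a source point of $Z$. The guiding identity is the data-processing inequality for KL: for any measurable map $T$ that fixes $Z$ (hence fixes $\mu_0,\nu_0$) one has $\mathsf{KL}(T_\#\nu\|T_\#\mu)\le\mathsf{KL}(\nu\|\mu)$, while $\wass(T_\#\mu,\mu_0)\le\bE_{\Gamma_1}[c(T(u),T(v))]$, so relocation is admissible as soon as $T$ does not increase the pairwise costs charged by the optimal couplings.

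The main obstacle is precisely this last feasibility check. The naive nearest-point retraction onto $Z$ is not non-expansive, and collapsing a shared atom at an interior point $w\notin Z$ onto a single target in $Z$ typically lowers one of the two transport costs while raising the other, so admissibility under the separate budgets $r_1,r_2$ is not automatic. I would circumvent this by invoking optimality rather than a generic pushforward: at a minimizer, an interior shared atom can be varied along the two optimal transport geodesics emanating from its $\mu$- and $\nu$-sources, and a first-order exchange argument shows such interior mass is redundant. The point is that the KL contribution of a paired chunk of mass depends only on its likelihood ratio, which is unaffected by moving the two pieces together, whereas each transport cost is linear in the atom's location; consequently the chunk can be slid onto $Z$ at no increase in objective, with any coincidence of targets only helping through the log-sum inequality for merged atoms, and without violating either budget. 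Iterating over the interior atoms (or passing to a weak limit) yields an optimizer supported on $Z$.

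Once both marginals live on $Z$, the reduction to \eqref{eq:wass vs wass:dual} is bookkeeping: identify $\mu,\nu$ with probability vectors $p_1,p_2\in\mathbb R_+^{n}$ and the couplings with matrices $\Gamma_1,\Gamma_2\in\mathbb R_+^{n\times n}$; the first-marginal constraints become the row-sum conditions $\Gamma_k\mathbf 1_n$ equal to $\mu_0,\nu_0$, the second marginals become the column-sum conditions $\Gamma_k^{\intercal}\mathbf 1_n$ equal to $p_1,p_2$, each Wasserstein budget becomes $\sum_{l,m}\Gamma_{k,l,m}\,c(z_l,z_m)\le r_k$, and the objective becomes $\sum_{l}p_2^{l}\log(p_2^{l}/p_1^{l})$. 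Finally, this objective is a sum of perspectives of the convex map $t\mapsto t\log t$ and is therefore jointly convex in $(p_1,p_2)$, while every constraint is linear, so \eqref{eq:wass vs wass:dual} is a finite-dimensional convex program, completing the identification.
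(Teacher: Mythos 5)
Your coupling reformulation of the ambiguity sets and the final bookkeeping step are both correct, and your use of the data-processing inequality for $\mathsf{KL}$ under maps fixing $Z$ is exactly the mechanism the paper also relies on (it justifies $\mathsf{KL}(\nu'\|\mu')\le\mathsf{KL}(\nu\|\mu)$ for the pushforward pair via convexity of the KL divergence). The genuine gap is in your third paragraph, which is the heart of the theorem and which you yourself flag as ``the main obstacle'': you never actually prove that a shared atom at $w\notin Z$ can be relocated onto $Z$ without violating one of the two transport budgets. The supporting claim that ``each transport cost is linear in the atom's location'' is false: $c$ is a metric, so along a geodesic toward the $\mu$-source $x_i$ the cost $c(\cdot,x_i)$ decreases linearly, but the cost $c(\cdot,y_j)$ to the $\nu$-source is only $1$-Lipschitz and can increase by the full amount moved; moreover, in a general metric sample space $\mathcal X$ geodesics need not even exist. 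At an optimizer both Wasserstein constraints are typically active (enlarging either set strictly decreases the optimal KL), so every local move that decreases one cost while increasing the other is infeasible, and a ``first-order exchange'' argument gives nothing: an atom-by-atom relocation can be blocked at every single step even though a globally rearranged $Z$-supported optimizer exists. As written, the key step of your proposal is an assertion, not a proof.

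The paper escapes precisely this difficulty by never working with the two hard constraints. It passes to the Lagrangian via Kantorovich duality, so the two transport costs enter a single objective as penalties weighted by multipliers $\lambda_1,\lambda_2\ge 0$, and it defines the relocation map adaptively: $T(\xi)=x_{i_1(\xi)}$ if $\lambda_1\,d\mu(\xi)\ge\lambda_2\,d\nu(\xi)$ and $T(\xi)=y_{i_2(\xi)}$ otherwise. With this choice, the triangle inequality shows the increase of at most $\lambda_2\,c(\xi,x_{i_1(\xi)})\,d\nu(\xi)$ in one penalized term is dominated by the decrease of at least $\lambda_1\,c(\xi,x_{i_1(\xi)})\,d\mu(\xi)$ in the other, so the combined objective never increases and individual budget feasibility never has to be checked. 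This proves support reduction to $\hat{\mathcal X}=\{z_1,\ldots,z_n\}$ for the inner infimum of the \emph{dual}, and the identification of \eqref{eq:kl_min} with \eqref{eq:wass vs wass:dual} is then closed by weak duality together with strong duality for the finite-dimensional convex program, following \cite{gao2018robust}. If you want to salvage a primal-style argument, you would have to aggregate the two budgets into a single weighted constraint using the optimal multipliers, which is the same duality in disguise; the Lagrangian step is not an optional convenience here but the ingredient that makes the relocation argument go through.
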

\begin{proof}
See the Appendix.
\end{proof}
Note that the optimization problem in \eqref{eq:wass vs wass:dual} is the problem \eqref{eq:kl_min} for discrete distributions supported on the joint empirical samples $\{z_1,\ldots,z_n\}$. The variables $\Gamma_1,\Gamma_2$ are two matrices representing how the probability mass is transported between the empirical distributino $\mu_0,\nu_0$ and the desired LFD $p_1,p_2$. 
Instead of solving the infinite-dimensional problem \eqref{eq:kl_min}, we can now solve the finite-dimensional optimization problems \eqref{eq:wass vs wass:dual} which can be solved efficiently using off-the-shelf solvers. It is a linear program when $c$ is $\ell_1$ or $\ell_\infty$ norms, and a conic program when $c$ is $\ell_2$ norm, and the complexity quadratically depends on $n$.
It is worth mentioning that the equivalence between \eqref{eq:wass vs wass:dual} and \eqref{eq:kl_min} is not obvious and depends on the properties of the objective function and ambiguity sets. 

\subsection{Optimal Stopping Time}

Once we find $(\tilde \mu,\tilde \nu)$ by which the ambiguity sets are weakly stochastically bounded, the optimal stopping time that solves the problem \eqref{eq:Lorden} asymptocially can be constructed as the CUSUM procedure \cite{page-biometrica-1954} based on $(\tilde \mu,\tilde \nu)$. The detection statistic can be computed recursively as
\begin{equation}\label{eq:cusum}
S_t = (S_{t-1})^{+} + \log \frac{d\tilde \nu}{d\tilde \mu}(x_t), \quad S_0 = 0,
\end{equation}
and stopping time is therefore defined as
\begin{equation}\label{eq:cusum_stop}
\cT := \inf\{t: S_t \geq b\},
\end{equation}
where $b$ is a pre-specified threshold such that the average run length meets the desired lower bound $\gamma$.

\begin{theorem}[Asymptotical Optimality]\label{thm:opt}
Consider the ambiguity sets $\calP_{\mu_0},\calP_{\nu_0}$ in \eqref{eq:was_set_vanilla}, and suppose the pair of distributions $(\tilde \mu,\tilde \nu)$ found through \eqref{eq:wass vs wass:dual} satisfies the condition \eqref{eq:cond2}. Then the CUSUM test \eqref{eq:cusum}-\eqref{eq:cusum_stop} under $(\tilde \mu,\tilde \nu)$ with threshold $b=|\log \gamma|$ solves \eqref{eq:Lorden} and \eqref{eq:Pollak} asymptotically as $\gamma \rightarrow \infty$.
\end{theorem}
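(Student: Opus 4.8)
The plan is to show that the pair $(\tilde\mu,\tilde\nu)$ produced by Theorem \ref{thm:lfd}, together with the assumed inequality \eqref{eq:cond2}, makes the ambiguity sets $(\calP_{\mu_0},\calP_{\nu_0})$ weakly stochastically bounded by $(\tilde\mu,\tilde\nu)$ in the sense of \eqref{eq:cond1}--\eqref{eq:cond2}, and then to invoke the asymptotic optimality of the CUSUM test under such a bounding pair established in \cite{molloy2017misspecified}. The argument therefore splits into (i) verifying the two defining inequalities, and (ii) turning weak stochastic boundedness into matching upper and lower bounds on the minimax delay for both \eqref{eq:Lorden} and \eqref{eq:Pollak}.

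For step (i), condition \eqref{eq:cond2} is exactly the hypothesis of the theorem, so nothing is required there. For condition \eqref{eq:cond1} I would first rewrite its right-hand side as $\mathsf{KL}(\nu\|\tilde\mu)-\mathsf{KL}(\nu\|\tilde\nu)=\mathbb{E}^{\nu}[\log(d\tilde\nu/d\tilde\mu)(X)]$, so that \eqref{eq:cond1} asserts that the expected one-step log-likelihood-ratio increment under any $\nu\in\calP_{\nu_0}$ is at least $\mathsf{KL}(\tilde\nu\|\tilde\mu)$. By Theorem \ref{thm:lfd} the pair $(\tilde\mu,\tilde\nu)$ solves the KL-minimization \eqref{eq:kl_min}, and by the equivalence established in \cite{molloy2017misspecified} (recalled just before \eqref{eq:kl_min}) the minimizer of \eqref{eq:kl_min} satisfies \eqref{eq:cond1}. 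Hence both defining inequalities hold and $(\tilde\mu,\tilde\nu)$ is a weakly stochastically bounding pair.

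For step (ii) I would control the false-alarm and delay sides separately. Feasibility $\cT\in C(\gamma,\calP_{\mu_0})$, i.e.\ $\mathbb{E}_\infty^\mu[\cT]\ge\gamma$ for every $\mu\in\calP_{\mu_0}$, follows from \eqref{eq:cond2}: this inequality forces the one-step likelihood ratio to have mean at most one under every pre-change $\mu$, so the standard supermartingale/optional-stopping argument for CUSUM gives $\mathbb{E}_\infty^\mu[\cT]\ge e^{b}=\gamma$ uniformly in $\mu$ once $b=|\log\gamma|$. For the delay, \eqref{eq:cond1} shows the post-change drift $\mathbb{E}^{\nu}[\log(d\tilde\nu/d\tilde\mu)]$ is minimized over $\calP_{\nu_0}$ at $\nu=\tilde\nu$ with value $\mathsf{KL}(\tilde\nu\|\tilde\mu)$; by the nonlinear renewal analysis of the CUSUM statistic the worst case of both $\WADD$ and $\CADD$ over $\calP_{\nu_0}$ is attained asymptotically at $\tilde\nu$, giving $\sup_{\nu\in\calP_{\nu_0}}\WADD^{\mu,\nu}(\cT)\sim |\log\gamma|/\mathsf{KL}(\tilde\nu\|\tilde\mu)$.

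Finally I would match this achieved delay with a lower bound. For any $T\in C(\gamma,\calP_{\mu_0})$ we have $\mathbb{E}_\infty^{\tilde\mu}[T]\ge\gamma$ because $\tilde\mu\in\calP_{\mu_0}$, so Lorden's classical lower bound applied at the single pair $(\tilde\mu,\tilde\nu)$ gives $\WADD^{\tilde\mu,\tilde\nu}(T)\ge |\log\gamma|/\mathsf{KL}(\tilde\nu\|\tilde\mu)\,(1+o(1))$; since $(\tilde\mu,\tilde\nu)\in\calP_{\mu_0}\times\calP_{\nu_0}$ this lower-bounds the supremum over the ambiguity sets, hence the minimax value in \eqref{eq:Lorden}. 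Dividing the achieved worst-case delay by this lower bound yields a ratio tending to $1$, which is first-order asymptotic optimality, and the identical argument with $\CADD$ settles \eqref{eq:Pollak}. I expect the main obstacle to be the uniformity of the false-alarm control: the supermartingale bound must hold simultaneously for all $\mu\in\calP_{\mu_0}$ rather than only at the nominal $\mu_0$, and it is precisely condition \eqref{eq:cond2}---guaranteed here by assumption and \emph{not} by the KL-minimization alone---that supplies this uniform guarantee.
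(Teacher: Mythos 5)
Your proposal is correct and follows essentially the same route as the paper: your step (i) is exactly the paper's proof---Theorem \ref{thm:lfd} gives that $(\tilde\mu,\tilde\nu)$ solves \eqref{eq:kl_min} and hence satisfies \eqref{eq:cond1}, which together with the assumed \eqref{eq:cond2} yields weak stochastic boundedness of $(\calP_{\mu_0},\calP_{\nu_0})$ by $(\tilde\mu,\tilde\nu)$. Your step (ii) simply unfolds what the paper invokes as a black box (Theorem 3 of \cite{molloy2017misspecified}), sketching the supermartingale false-alarm bound, the renewal-theoretic delay bound, and the Lorden-type lower bound at the pair $(\tilde\mu,\tilde\nu)$ that underlie that citation.
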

\begin{proof}
From Theorem \ref{thm:lfd}, the pair $(\tilde \mu,\tilde \nu)$ is a solution to \eqref{eq:kl_min}, which is equivalent to \eqref{eq:cond1}. If $(\tilde \mu,\tilde \nu)$ also satisfies \eqref{eq:cond2}, then the ambiguity sets $\calP_{\mu_0},\calP_{\nu_0}$ are weakly stochastically bounded by $(\tilde \mu,\tilde \nu)$. From Theorem 3 in \cite{molloy2017misspecified}, we have the desired results.
\end{proof}

%


Note that the CUSUM procedure as in \eqref{eq:cusum} with threshold $b=|\log \gamma|$ is asymptotically optimal for both Lorden's and Pollak's formulations when the true distribution is $\tilde \mu$ and $\tilde \nu$. The results in Theorem \ref{thm:opt} means that when we have two ambiguity sets, the CUSUM test based on the LFDs are minimax robust asymptotically optimal for the robust Lorden's \eqref{eq:Lorden} and Pollak's formulations \eqref{eq:Pollak}. When the true distributions differ from the LFDs, the price we pay in performance loss is due to the robustness that we would like to guarantee.



\subsection{Extensions and Modifications}\label{sec:extension}
We note that the LFDs found through \eqref{eq:wass vs wass:dual} is only supported on the historical data used to construct the empirical distributions. When applied to new observations that are outside the support of those empirical distributions, we need to modify the algorithm to make it applicable in real scenarios. Here we mention two possible methods. 

\subsubsection{Kernel convolution}\label{sec:kernel}
Firstly, we may interpolate the discrete LFDs within the entire sample space $\mathcal X$, through, for example, kernel convolution. And then apply the modified LFDs to calculate the detection statistics for new observations. A simple example is to convolve with the Gaussian kernel $K_h(x,y)=\exp\{-(x-y)^2/(2h^2)\}/\sqrt{2\pi h^2}$, with a carefully chosen kernal bandwidth. More specifically, the smoothed LFDs after convolution are
\[
\tilde \nu'(x) = \int \tilde \nu(y)K_h(x,y)dy,\ \tilde \mu'(x) = \int \tilde \mu(y)K_h(x,y)dy.
\]
Thus the detection statistic in \eqref{eq:cusum} becomes $S_t = (S_{t-1})^{+} + \log \{d\tilde \nu'(x_t)/d\tilde \mu'(x_t)\}$.

\subsubsection{Binning approach}\label{sec:bin} 
Second is a binning approach, which has been used previously in change-point detection problems but for different purposes \cite{lau2018binning}. In detail, we could partition the sample space $\mathcal X$ into $L$ exclusive and exhaustive regions, $\mathcal X_1,\ldots,\mathcal X_L$, satisfying $\mathcal X_i\cap \mathcal X_j=\emptyset$ and $\cup_{i=1}^L \mathcal X_i = \mathcal X$. In this way, we convert any continuous distribution into discrete ones and the LFDs can then be used naturally for new observations.

\section{Numerical results}\label{sec:numerical}

In this section, we investigate the performance of the proposed robust test based on Wasserstein ambiguity sets (which we call Robust-Was CUSUM in this section). 
For illustrative purposes, we consider a simple Gaussian mean shift example where the data distribution changes from $\mu=\mathcal N(0,1)$ to $\nu_m=\mathcal N(m,1)$ with the post-change mean $m$ takes two possible values $0.5$ and $1$, representing different signal-to-noise ratios. 
We randomly generate $50$ samples from the pre-change distribution $\mathcal N(0,1)$ and $50$ samples from the post-change distribution $\mathcal N(m,1)$. Then we construct the ambiguity sets based on the Wasserstein metric as shown in \eqref{eq:was_set_vanilla}. Then we solve the convex programming problem \eqref{eq:wass vs wass:dual} to find the LFDs $\tilde \mu$ and $\tilde \nu$. The Robust-Was CUSUM test is constructed based on the LFDs, according to the definition \eqref{eq:cusum} and \eqref{eq:cusum_stop}.

In the first result, we use the convolution approach to extend the LFDs to the whole sample space and then calculate the resulting CUSUM statistic. We compare the performance of the Robust-Was CUSUM test with the exact CUSUM and the GLR test. In detail, the exact CUSUM test is constructed assuming full knowledge of the true distributions, i.e., the exact CUSUM statistic is defined as in \eqref{eq:cusum} using true distributions $\nu$ and $\mu$. Moreover, the exact CUSUM test is the optimal test in the sense that it has the smallest detection delay and thus serves as the information-theoretic lower bound to the detection delay \cite{Lorden1971,moustakides1986optimal}. 
The GLR test is designed for the case when the post-change parameter $m$ is unknown. The parameter is estimated using maximum likelihood estimate and plugged into the log-likelihood ratio to calculate the GLR statistics. Moreover, to increase the efficiency, we adopt the window-limited GLR approach with the test statistic \cite{lai-ieeetit-1998}:
\[
S_t^{\text G}: = \max_{t-W\leq k \leq t} \max_{m\in \mathbb{R}}\sum_{i=k}^t \log\frac{d\nu_m}{d\mu}(x_i), 
\]
where $W$ is the window size and is chosen at $50$, the same as the number of empirical observations used in Robust-Was CUSUM.
The radii parameters $r_1$ and $r_2$ are set to be equal. We select smaller radii for smaller post-change mean, since the empirical samples tend to be closer as $m$ decreases and we need two ambiguity sets to have an empty intersection. 
The kernel bandwidth parameter as in Section \ref{sec:kernel} is chosen as $h=0.25$. Moreover, each time after solving the LFDs, we verify that the condition \eqref{eq:cond2} indeed holds.

We plot the expected detection delay versus average run length for different methods, averaged over 10000 times, as shown in Fig. \ref{fig:robust_cusum}. 
We see that the robust CUSUM derived from the Wasserstein ambiguity sets has a smaller delay than the GLR test.

\begin{figure}[!ht]
\centering
\begin{tabular}{cc}
\includegraphics[width=0.48\linewidth]{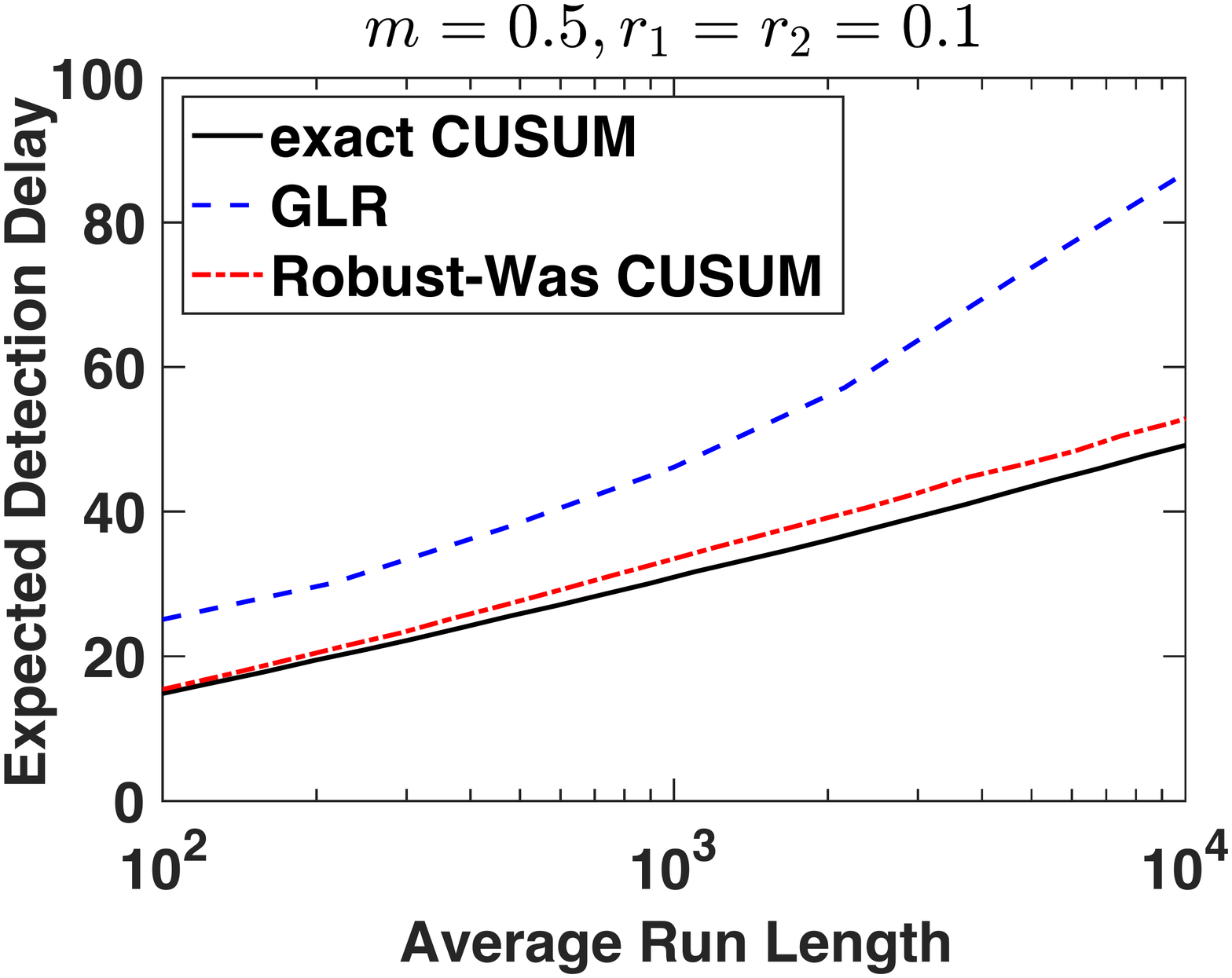} &
\includegraphics[width=0.48\linewidth]{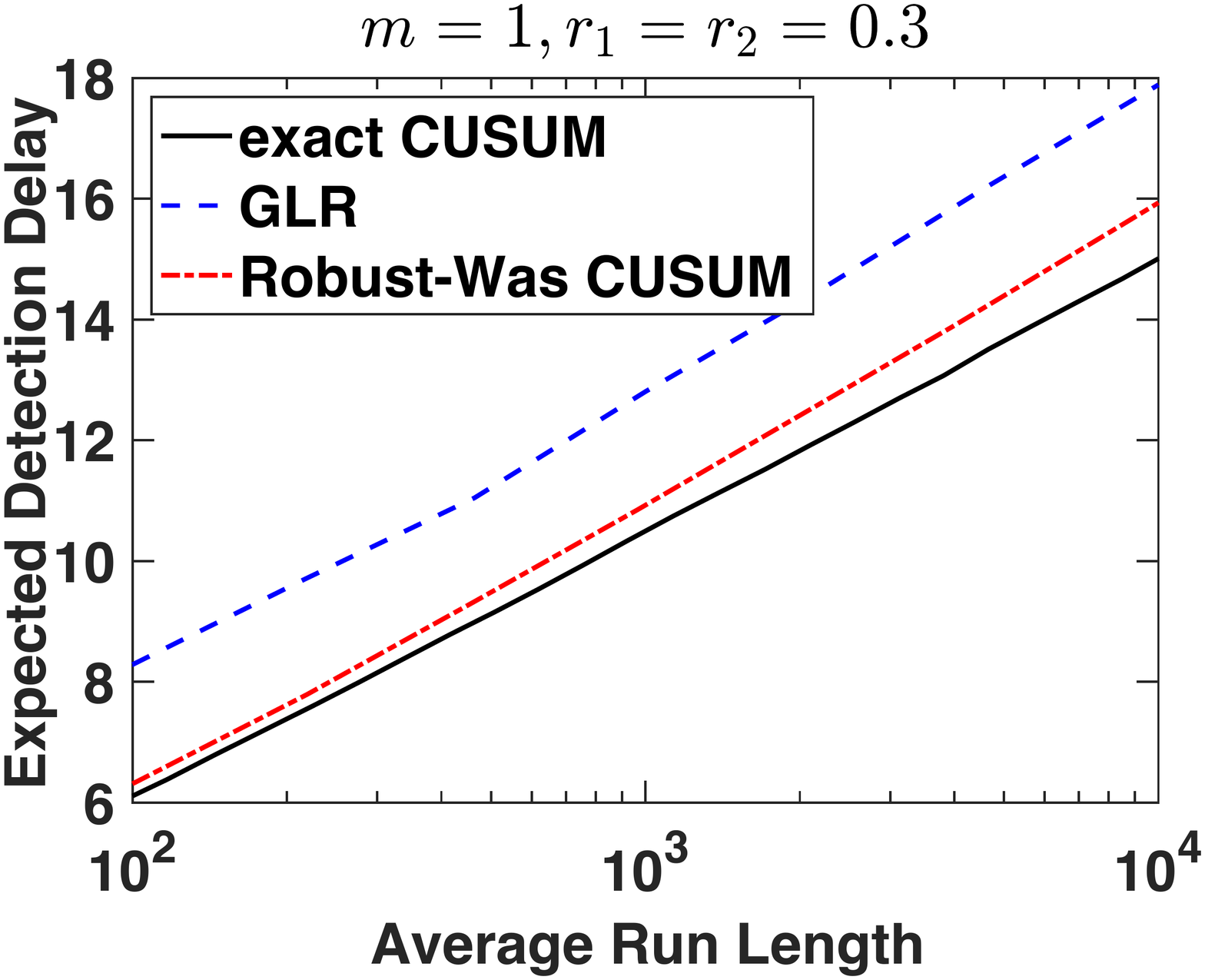} \\
{\small $m=0.5,r_1=r_2=0.1$} & {\small $m=1,r_1=r_2=0.3$}
\end{tabular}
\caption{The detection delay comparison with exact CUSUM and GLR, under different signal-to-noise ratios.}
\label{fig:robust_cusum}
\vspace{-0.1in}
\end{figure}
 
We then compare the performance of the proposed method under the binning approach detailed in Section \ref{sec:bin}, with bin size $L=20$. We select the breakpoints such that the resulting discretized pre-change distributions is a uniform distribution. In such case, we can also compare with the robust CUSUM test based on KL ambiguity sets \cite{molloy2017misspecified}, where the two ambiguity sets are constructed using the KL divergence and LFDs are again found through \eqref{eq:cond1} and \eqref{eq:cond2}. The detection delay shown in Fig. \ref{fig:robust_cusum_bin} shows that the KL robust CUSUM test tends to have a larger detection delay and the proposed Robust-Was CUSUM test still has a better performance.

\begin{figure}[!ht]
\centering
\begin{tabular}{cc}
\includegraphics[width=0.48\linewidth]{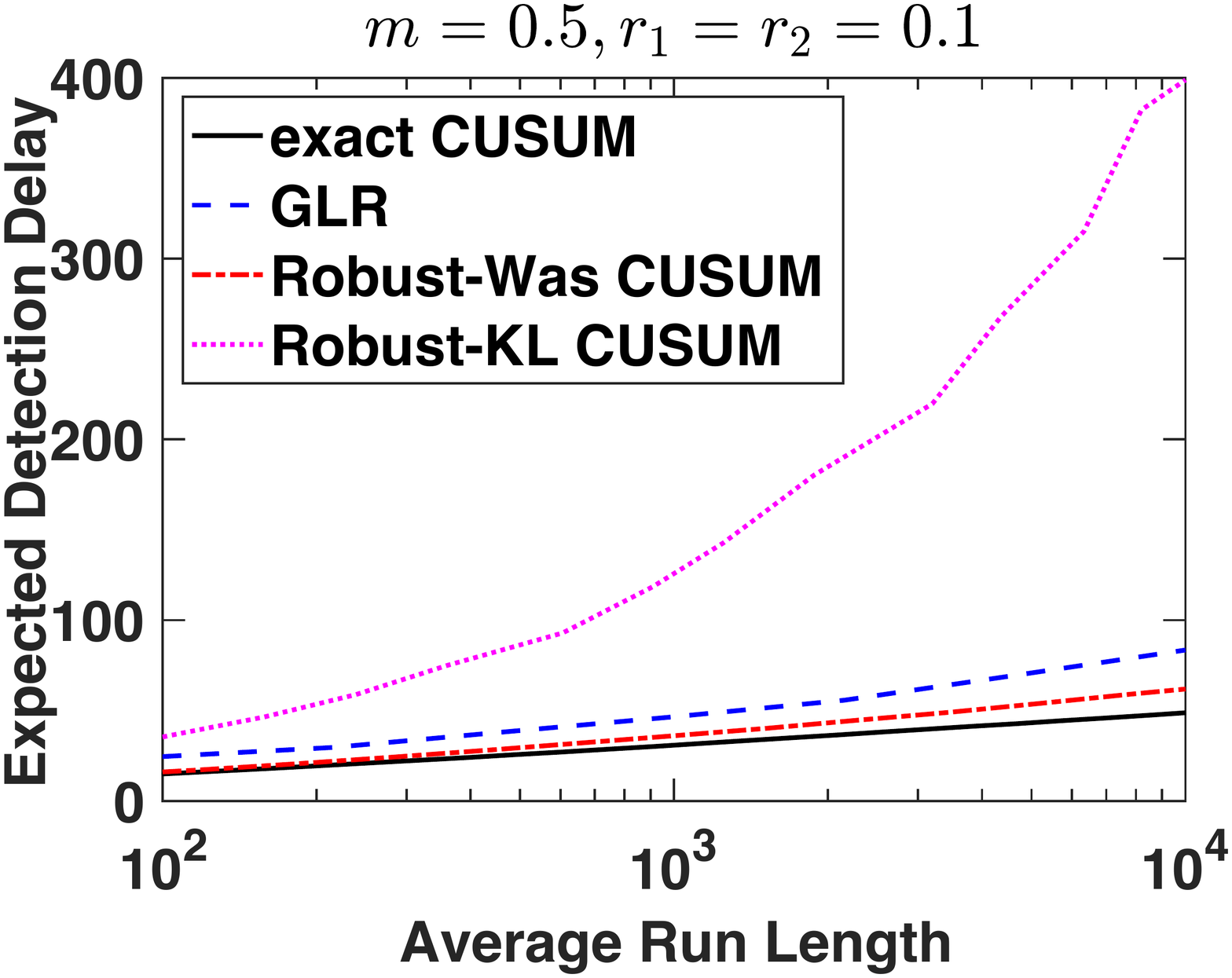} &
\includegraphics[width=0.48\linewidth]{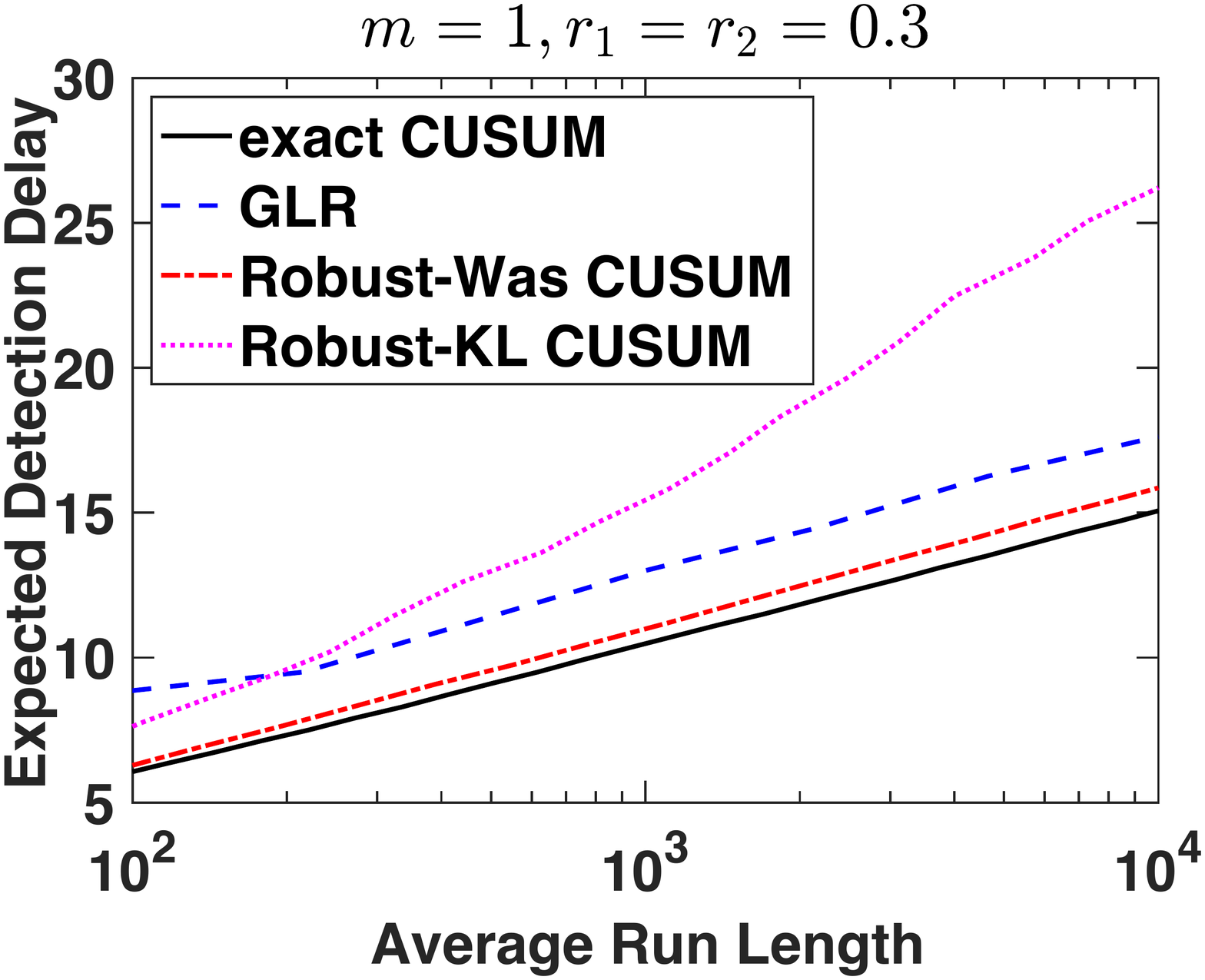} \\
{\small  $m=0.5$ } & {\small  $m=1$}
\end{tabular}
\caption{The detection delay comparison with exact CUSUM, GLR, and the robust test under KL ambiguity sets, after data binning.}
\label{fig:robust_cusum_bin}
\vspace{-0.1in}
\end{figure}

We also compare the performance when the observations are contaminated. We add a uniform noise (contamination) into the observations. The contamination follows the uniform distributions on the interval $[-\epsilon,0]$. We test five values for $\epsilon$ from $0.1$ to $0.5$, representing different strength levels of the contamination. The average detection delay is plotted in Fig. \ref{fig:robust_cusum_bin_conta}. The exact CUSUM algorithm is no longer optimal when the observations are contaminated, since there is a mismatch between the distribution used to construct CUSUM statistics and the true data distribution after contamination. From Fig. \ref{fig:robust_cusum_bin_conta}, we see that the proposed method may even have a smaller detection delay than the exact CUSUM method.

\begin{figure}[!ht]
\centering
\includegraphics[width=0.6\linewidth]{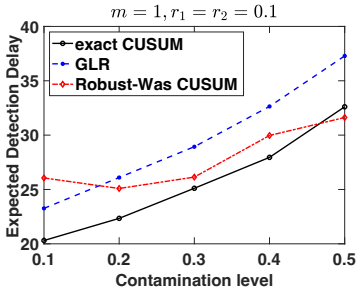}
\vspace{-0.1in}
\caption{The detection delay comparison with exact CUSUM and GLR, after data binning and for contaminated data. ARL fixed at 50000.}
\label{fig:robust_cusum_bin_conta}
\vspace{-0.1in}
\end{figure}


\section{Conclusions and Discussions}\label{sec:conclusion}
We applied Wasserstein ambiguity sets to robust quickest change detection. This also brought new questions worth investigating. First, it would be of great importance to study a data-driven and precise characterization of the radii for future work. Second, the real data are usually under contamination; thus it would be interesting to study the theoretical performance of the robust test under contaminated data or outliers. Third, the LFDs solved in this work are discrete distributions, it would be 
worthwhile to study the theoretical loss or explore different ways to better fix such problem. 
\section{Appendix}

\begin{proof}[Proof of Theorem \ref{thm:lfd}]
Denote by $L^1(\mu)$ the space of all integrable functions with respect to the measure $\mu$. Using the Kantorovich duality \cite{villani2008optimal}, the Wasserstein distance equals:
\[
\wass(\mu,\nu)=\sup_{\substack{ (\phi,\psi)\in L^1(\mu) \times L^1(\nu) \\ \phi(x) + \psi(y) \leq c(x,y) \\ \forall x,y}} \left( \int_{\mathcal X} \phi(x)d\mu + \int_{\mathcal X} \psi(y)d\nu \right).
\]
Following \cite{gao2018robust}, we rewrite the problem using the Lagrangian of the optimization problem \eqref{eq:kl_min} and the above equation:
\[
\begin{aligned}
&\inf_{\mu,\nu\in\scrP(\Omega)}\sup_{\substack{\lambda_1,\lambda_2 \geq 0\\u_1\in \mathbb R^{n_1}, u_2\in \mathbb R^{n_2}\\v_1\in L^1(\mu),v_2\in L^1(\nu)}} \Big\{\mathsf{KL}(\nu||\mu) - \lambda_1 r_1 - \lambda_2 r_2 +  \\
& \frac{1}{n_1}\sum_{i=1}^{n_1} u_1^i + \frac{1}{n_2}\sum_{i=1}^{n_2} u_2^i +  \int_\mathcal X v_1(x) d\mu + \int_\mathcal X v_2(x) d\nu : \\
&u_1^i+v_1(\xi)\leq \lambda_1 c(\xi,x_i),\ \forall 1\leq i\leq n_1, \forall \xi \in \mathcal X, \\
& u_2^i+v_2(\xi)\leq \lambda_2 c(\xi,y_i),\ \forall 1\leq i\leq n_2, \forall \xi \in \mathcal X \Big\}.
\end{aligned}
\]
Furthermore, since the objective function is increasing in $v_1,v_2$, we can replace $v_1$ with $\min_{1\leq i\leq n_1} \{\lambda_1 c(\xi,x_i) - u_1^{i}\}$ and replace $v_2$ with $\min_{1\leq i\leq n_2} \{\lambda_1 c(\xi,y_i) - u_2^{i}\}$. Interchanging $\sup$ and $\inf$, we have
\[
\begin{aligned}
&\inf_{\mu\in \calP_{\mu_0}, \nu\in\calP_{\nu_0}} \mathsf{KL}(\nu||\mu)\\
\geq &\!\!\!\sup_{\substack{\lambda_1,\lambda_2 \geq 0\\u_1\in \mathbb R^{n_1}, u_2\in \mathbb R^{n_2}}} \bigg\{- \lambda_1 r_1 - \lambda_2 r_2 +  \frac{1}{n_1}\sum_{i=1}^{n_1} u_1^i + \frac{1}{n_2}\sum_{i=1}^{n_2} u_2^i +\\ &\inf_{\mu,\nu\in\scrP(\mathcal X)}  \Big\{ \mathsf{KL}(\nu||\mu) + \int_\mathcal X \min_{1\leq i\leq n_1} \{\lambda_1 c(\xi,x_{i}) - u_1^{i}\} d\mu(\xi) \\
&\hspace{50pt} + \int_\mathcal X \min_{1\leq i\leq n_2} \{\lambda_2 c(\xi,y_{i}) - u_2^{i}\} d\nu(\xi) \Big\} \bigg\}.    
\end{aligned}
\]
For the inner infimum problem, note that $\forall(\mu,\nu)$ and $\forall \xi\in \supp(\mu)\cup\supp(\nu)$, let $i_1(\xi) = \argmin_{i}\{\lambda_1 c(\xi,x_{i}) - u_1^{i}\}$, $i_2(\xi) = \argmin_{i}\{\lambda_2 c(\xi,y_{i}) - u_2^{i}\}$ , set
\[
\begin{aligned}
T(\xi) 
:=& \begin{cases} x_{i_1(\xi)}, & \text{if } \lambda_1d\mu(\xi) \geq \lambda_2d\nu(\xi),  \\
y_{i_2(\xi)}, & \text{if } \lambda_1d\mu(\xi) < \lambda_2d\nu(\xi),
\end{cases}
\end{aligned}
\]
then $T(\xi)$ belongs to the minimum of $\min_{1\leq i\leq n_1} \{\lambda_1 c(\xi,x_{i}) - u_1^{i}\} d\mu(\xi) +  \min_{1\leq i\leq n_2} \{\lambda_2 c(\xi,y_{i}) - u_2^{i}\} d\nu(\xi)$.
Moreover, construct another distributions $(\mu',\nu')$ such that $\mu'(B)=\mu\{\xi\in\mathcal X:T(\xi)\in B\}$ and $\nu'(B)=\nu\{\xi\in\mathcal X:T(\xi)\in B\}$ for any Borel set $B\subset \hat{\mathcal X}:=\{z_1,\ldots,z_n\}$. Then it is easy to see that
\[
\begin{aligned}
&\int_{\hat{\mathcal X}} \min_{1\leq i\leq n_1} \{\lambda_1 c(\xi,x_{i}) - u_1^{i}\} d\mu'(\xi) +\\
& \hspace{50pt} \int_{\hat{\mathcal X}}  \min_{1\leq i\leq n_2} \{\lambda_2 c(\xi,y_{i}) - u_2^{i}\} d\nu'(\xi)\\
\leq & \int_\mathcal X \min_{1\leq i\leq n_1} \{\lambda_1 c(\xi,x_{i}) - u_1^{i}\} d\mu(\xi)  + \\
& \hspace{50pt} \int_\mathcal X \min_{1\leq i\leq n_2} \{\lambda_2 c(\xi,y_{i}) - u_2^{i}\} d\nu(\xi)
\end{aligned}
\]
In addition, for $\nu$ that is absolutely continuous with respect to $\mu$, we have 
$\mathsf{KL}(\nu'||\mu')\leq \mathsf{KL}(\nu||\mu)$ since the KL divergence is a convex function.


Hence $(\mu',\nu')$ yields an objective value no worse than $(\mu,\nu)$ for the inner infimum problem. This means that it suffices to only consider $(\mu,\nu)$ supported on the empirical set $\hat{\mathcal X}$. Following a similar argument as in \cite{gao2018robust},
the optimization problem can be reduced to a finite-dimensional convex optimization problem as shown in Theorem \ref{thm:lfd}. 
\end{proof}

\clearpage
\bibliographystyle{IEEEtran}
\bibliography{ref}

\end{document}